\newtheorem{theorem}{Theorem}[section]
\newtheorem{proposition}{Proposition}[section]
\newtheorem{lemma}{Lemma}[section]
\newtheorem{remark}{Remark}[section]
\numberwithin{equation}{section}
\newcommand{\beq}{\begin{eqnarray}}
	\newcommand{\eeq}{\end{eqnarray}}
\newcommand{\beqn}{\begin{eqnarray*}}
	\newcommand{\eeqn}{\end{eqnarray*}}
\newcommand{\rar}{\rightarrow}
\newcommand*{\Ge}{\geqslant}
\newcommand*{\Le}{\leqslant}
\newcommand*{\inp}[2]{\langle{#1},\,{#2} \rangle}
\newcommand*{\mf}{\mathbf}
\newcommand{\bc}{\begin{centre}}
	\newcommand{\ec}{\end{centre}}
\newcommand{\ba}{\begin{array}}
	\newcommand{\ea}{\end{array}}
\renewcommand{\Delta}{{\nabla}}
\begin{document}
\title[Nevanlinna-Pick interpolation in the right half-plane]
{Nevanlinna-Pick interpolation \\ in the right half-plane}

\author{Sameer Chavan (Corresponding author)}
\author{Chaman Kumar Sahu}
\address{Department of Mathematics and Statistics\\
	Indian Institute of Technology Kanpur, India}
\email{chavan@iitk.ac.in}
\address{Department of Mathematics\\
	Indian Institute of Technology Mumbai, India}
\email{chamanks@math.iitb.ac.in, sahuchaman9@gmail.com}


\subjclass[2020]{Primary 46E22; Secondary 11Z05}
\keywords{Dirichlet series, Pick property, Szegö-Dirichlet kernel, reproducing kernel}	

\begin{abstract}
The Szegö-Dirichlet kernel of the right half-plane $\mathbb H_{1/2}$ is given by 
${\varkappa}(s, u) = \zeta(s+\overline{u}),$ $s, u \in \mathbb H_{1/2},$
where $\zeta$ denotes the Riemann zeta function. We show that none of the positive integer powers of $\varkappa$ has the $2$-point scalar Pick property. Nevertheless, a network realization formula for the right half-plane $\mathbb H_0$ is obtained.   
\end{abstract}

\maketitle

\section{Introduction and main results}

For a nonempty open connected subset $\Omega$ in $\mathbb C,$ 
let $H^{\infty}(\Omega)$ denote the space of scalar-valued bounded holomorphic functions on $\Omega.$ The closed unit ball of $H^\infty(\Omega)$ is denoted by $H_1^\infty(\Omega).$ 
Given a positive semi-definite kernel $\kappa : \Omega \times \Omega \rar \mathbb C,$ by Moore's theorem (see \cite[Theorem~2.5]{AMY}), there exists a reproducing kernel Hilbert space of scalar-valued functions on $\Omega$ associated with $\kappa.$ Recall that the function $\kappa(\cdot, u) \in \mathscr H_\kappa$ and $\inp{\kappa(\cdot, u)}{\kappa(\cdot, s)}=\kappa(s, u)$ for every $s, u \in \Omega$ (refer to \cite{AMY, P-R} for basics of reproducing kernel Hilbert spaces). 
Sometimes, we use the symbol $\kappa_{u}$ for the  function $\kappa(\cdot, u).$ 
For a positive integer $m,$ the $m$-fold pointwise product of $\kappa$ with itself is denoted by $\kappa^m.$ The multiplier algebra of $\mathscr H_\kappa$ will be denoted by $\mathcal M(\mathscr H_\kappa).$ If $A$ is a positive semi-definite matrix with complex entries, then we denote this by $A \succeq 0.$ If $T$ is a bijective bounded linear transformation between two Hilbert spaces, then $T^{-1}$ denotes the bounded inverse of $T.$

The Nevanlinna-Pick interpolation problem in $\Omega$ asks, given distinct points $\lambda_1, \ldots, \lambda_n \in \Omega$ and $w_1, \ldots, w_n \in \mathbb C$, whether there exists $\varphi \in \mathcal M(\mathscr H_\kappa)$ of norm at most $1$ such that
\beq \label{inter-new}
\varphi(\lambda_j)=w_j, \quad j=1, \ldots, n.
\eeq
The positive semi-definiteness of the Pick matrix $
 [(1-w_j\overline{w}_k)\kappa(\lambda_j, \lambda_k)]_{j, k=1}^n$ is a necessary condition to solve the above interpolation problem (see \cite[Theorem 5.2]{A-M}).  
Following \cite{A-M}, we say that the kernel $\kappa$ on $\Omega$ has {\it the $n$-point scalar Pick property} if, for distinct points $\lambda_1, \ldots, \lambda_n \in \Omega$ and $w_1, \ldots,w_n \in \mathbb D$, $$
 [(1-w_j\overline{w}_k)\kappa(\lambda_j, \lambda_k)]_{j, k=1}^n \succeq 0,$$ 
	then there exists a multiplier $\varphi \in \mathcal M(\mathscr H_\kappa)$ of norm at most one such that \eqref{inter-new} holds.
We say that $\kappa$ has {\it scalar Pick property} if $\kappa$ has the $n$-point scalar Pick property for every positive integer $n$ (this is equivalent to \cite[Definition~9.22]{Mc-0}; see \cite[p.~112]{A-M-0} for a clarification).
In case $\kappa$ is the {\it Szegö kernel of the unit disc $\mathbb D,$} that is, $\kappa(z, w)=\frac{1}{1-z\overline{w}},$ $z, w \in \mathbb D,$ then $\kappa$ has scalar Pick property (see \cite[Theorem~1.3]{A-M}). 
 
In this paper, we address a Nevanlinna-Pick interpolation problem in the right half-plane $\mathbb H_{\delta}:=\{\sigma + it \in \mathbb C : \sigma > \delta\},$ where $\delta$ is a real number. 
Recall that the {\it Riemann zeta function} $\zeta$ is given by $\zeta(s)=\sum_{n=1}^\infty \frac{1}{n^s},$ $s \in \mathbb H_1.$
The {\it Szegö-Dirichlet kernel of the right half-plane $\mathbb H_{1/2}$} is the positive definite kernel $\varkappa : \mathbb H_{1/2} \times \mathbb H_{1/2} \rar \mathbb C$ given by 
\beqn 
\varkappa(s, u) = \zeta(s+\overline{u}), \quad s, u \in \mathbb H_{1/2}.
\eeqn
Let $\mathscr H^2$ denote the reproducing kernel Hilbert space associated with $\varkappa.$ 
The notation $\mathcal D$ is reserved for the set of all functions representable by a convergent Dirichlet
series in some right half-plane.

The starting point of our investigations is the following unpublished fact from \cite[p.~108]{Mc-notes}.
\begin{theorem}[McCarthy] \label{McC}
The Szegö-Dirichlet kernel of $\mathbb H_{1/2}$ does not have the scalar Pick property.
\end{theorem}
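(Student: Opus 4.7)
The plan is to prove the stronger claim that $\varkappa$ fails the $2$-point scalar Pick property. For distinct $\lambda_1,\lambda_2\in\mathbb H_{1/2}$ and $w_1,w_2\in\mathbb D$, a routine $2\times 2$ determinant computation shows that the Pick matrix $[(1-w_j\overline w_k)\varkappa(\lambda_j,\lambda_k)]_{j,k=1}^{2}$ is positive semi-definite if and only if
\[
|\rho(w_1,w_2)|^2 \;\Le\; 1 - r(\lambda_1,\lambda_2)^2,
\]
where $\rho(w_1,w_2)=(w_1-w_2)/(1-\overline w_2 w_1)$ is the pseudohyperbolic distance on $\mathbb D$ and $r(\lambda_1,\lambda_2):=|\zeta(\lambda_1+\overline\lambda_2)|/\sqrt{\zeta(2\re\lambda_1)\,\zeta(2\re\lambda_2)}\in[0,1)$.

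For the obstruction I would invoke the classical Hedenmalm-Lindqvist-Seip identification of $\mathcal M(\mathscr H^2)$ with the algebra of Dirichlet series that extend to bounded holomorphic functions on the strictly larger half-plane $\mathbb H_0$, together with the isometry $\|\varphi\|_{\mathcal M}=\sup_{\mathbb H_0}|\varphi|$. If $\varphi\in\mathcal M(\mathscr H^2)$ with $\|\varphi\|_{\mathcal M}\Le 1$ satisfies \eqref{inter-new}, then Schwarz-Pick on $\mathbb H_0$ forces
\[
|\rho(w_1,w_2)| \;\Le\; \rho_{\mathbb H_0}(\lambda_1,\lambda_2) \;=\; \frac{|\lambda_1-\lambda_2|}{|\lambda_1+\overline\lambda_2|}.
\]
So it is enough to exhibit $\lambda_1,\lambda_2$ with $\rho_{\mathbb H_0}(\lambda_1,\lambda_2)^2 + r(\lambda_1,\lambda_2)^2<1$ and then to pick $w_1,w_2\in\mathbb D$ whose pseudohyperbolic distance lies strictly between $\rho_{\mathbb H_0}(\lambda_1,\lambda_2)$ and $\sqrt{1-r(\lambda_1,\lambda_2)^2}$.

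The pole of $\zeta$ at $s=1$ supplies such points. Fix $\tau\ne 0$ and set $\lambda_1=\tfrac12+\eps+i\tau/2$, $\lambda_2=\tfrac12+\eps-i\tau/2$. As $\eps\to 0^+$, $\zeta(2\re\lambda_j)=\zeta(1+2\eps)\sim(2\eps)^{-1}$ blows up while $|\zeta(1+2\eps+i\tau)|$ stays bounded, so $r(\lambda_1,\lambda_2)\to 0$, whereas
\[
\rho_{\mathbb H_0}(\lambda_1,\lambda_2)^2 \;=\; \frac{\tau^2}{(1+2\eps)^2+\tau^2} \;\to\; \frac{\tau^2}{1+\tau^2} \;<\; 1.
\]
Hence $r^2+\rho_{\mathbb H_0}^2<1$ for every sufficiently small $\eps>0$, and taking $w_1=0$ and $w_2=c$ with $c\in(\rho_{\mathbb H_0}(\lambda_1,\lambda_2),\sqrt{1-r(\lambda_1,\lambda_2)^2})$ then yields the desired counterexample.

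The only genuinely non-trivial ingredient is the Hedenmalm-Lindqvist-Seip step: a priori, a multiplier of $\mathscr H^2$ is only controlled on $\mathbb H_{1/2}$, where the endpoint pole of $\zeta$ drives $r$ towards $1$ and Schwarz-Pick for $\mathbb H_{1/2}$ provides no obstruction. It is the extension of the $L^\infty$-bound from $\mathbb H_{1/2}$ to the strictly larger half-plane $\mathbb H_0$ that makes the gap above meaningful; beyond that the proof is a direct consequence of $\zeta$ having a simple pole at $s=1$.
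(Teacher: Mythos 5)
Your proposal is correct, and it shares the paper's central mechanism -- namely the Hedenmalm--Lindqvist--Seip isometry $\mathcal M(\mathscr H^2)=H^\infty(\mathbb H_0)\cap\mathcal D$, which converts the $2$-point multiplier problem on $\mathbb H_{1/2}$ into a bounded interpolation problem on the strictly larger half-plane $\mathbb H_0$ -- but it reaches the counterexample by a genuinely different route. The paper proves the stronger Theorem~\ref{cor-fail-2-point-Pick} by (a) passing through Lemma~\ref{pick-char-riemann}, which reduces positivity of the $\mathbb H_0$-Szeg\H{o} Pick matrix to the disc case by a Schur-product/Ballantine argument, and (b) exhibiting the concrete points $\lambda_1=1$, $\lambda_2=2$, $w_1=0$, and $w_2$ chosen via the explicit numerical values $\zeta(2),\zeta(3),\zeta(4)$ in \eqref{zeta-values}--\eqref{values-zeta}. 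You instead apply Schwarz--Pick on $\mathbb H_0$ directly, reformulate the $2\times2$ positivity conditions in terms of the pseudohyperbolic distance $\rho$ and the normalized correlation $r$, and then let the simple pole of $\zeta$ at $s=1$ do the work: pinching $\lambda_1,\lambda_2$ toward the critical line with a fixed nonzero imaginary separation $\tau$ drives $r\to 0$ while $\rho_{\mathbb H_0}\to \tau/\sqrt{1+\tau^2}<1$, opening the gap $(\rho_{\mathbb H_0},\sqrt{1-r^2})$ from which $|w_2|$ can be chosen. This is cleaner conceptually (no numerics, the pole of $\zeta$ is visibly the culprit), whereas the paper's explicit choice of points has the advantage of working uniformly for all powers $\varkappa^m$ at once; note your pole argument would also extend to $\varkappa^m$ since $r_m=r^m\le r$, but then one must replace HLS by the containment $\mathcal M(\mathscr H_{\kappa_{\mathbf a}})\subseteq H^\infty(\mathbb H_0)\cap\mathcal D$ for the coefficient sequence of $\zeta^m$, which the paper gets from \cite[Theorem~3.1(5)]{St}. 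It is also worth noting that the proof the paper attributes to McCarthy for Theorem~\ref{McC} itself goes through bounded interpolating sequences and is different from both your argument and the paper's Proposition~\ref{fail-2-point-Pick}.
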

The proof of this theorem, as presented in \cite{Mc-notes}, relies on a couple of nontrivial facts related to bounded interpolating sequences (see \cite[Theorem~ 9.10]{Mc-notes} and \cite[Theorem~2.1]{OS}).  
Our first main result of this paper generalizes Theorem~\ref{McC}.  
\begin{theorem} \label{cor-fail-2-point-Pick} Let $m$ be a positive integer and let $\varkappa$ be the Szegö-Dirichlet kernel of the right half-plane $\mathbb H_{1/2}.$ Then 
$\varkappa^m$ does not have the $2$-point scalar Pick property.
\end{theorem}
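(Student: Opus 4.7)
The strategy is to exhibit, for each positive integer $m$, a positive semi-definite $2$-point Pick datum for $\varkappa^m$ that cannot be interpolated by a multiplier of norm at most one, by contradicting the classical Schwarz-Pick inequality on a half-plane. Any multiplier $\varphi \in \mathcal M(\mathscr H_{\varkappa^m})$ with $\|\varphi\|_{\mathcal M} \Le 1$ satisfies $|\varphi(s)| \Le 1$ for every $s \in \mathbb H_{1/2}$, because the multiplier norm dominates the supremum norm on the underlying domain. If $\varphi(\lambda_j) = w_j$ for $j = 1, 2$, the Schwarz-Pick inequality on $\mathbb H_{1/2}$ then forces
$$\left|\frac{w_1 - w_2}{1 - \overline{w}_1 w_2}\right| \;\Le\; \left|\frac{\lambda_1 - \lambda_2}{\lambda_1 + \overline{\lambda}_2 - 1}\right|.$$

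I would take $w_1 = 0$, $\lambda_j := \tfrac{1}{2} + \alpha_j t$ with fixed distinct positive reals $\alpha_1, \alpha_2$, a small parameter $t > 0$, and set $x := 4\alpha_1 \alpha_2 / (\alpha_1+\alpha_2)^2 \in (0,1)$. The positive semi-definiteness of the Pick matrix of $\varkappa^m$ at this datum reduces to
$$|w_2|^2 \;\Le\; 1 - \frac{\zeta(1 + (\alpha_1+\alpha_2)t)^{2m}}{\zeta(1+2\alpha_1 t)^m\,\zeta(1+2\alpha_2 t)^m},$$
and the simple-pole expansion $\zeta(1+h) \sim 1/h$ as $h \to 0^+$ drives the ratio above to $x^m$. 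Meanwhile, the right-hand side of the Schwarz-Pick estimate on these $\lambda_j$ equals $|\alpha_1-\alpha_2|/(\alpha_1+\alpha_2)$, whose square is $1 - x$. For $m \Ge 2$ the strict inequality $x^m < x$ opens a nonempty window: any fixed $|w_2|^2 \in (1-x,\,1-x^m)$ makes the Pick matrix strictly positive semi-definite for all sufficiently small $t$, while violating the Schwarz-Pick bound. This settles the case $m \Ge 2$.

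The main obstacle is $m = 1$, for which both bounds collapse to $1-x$ in the limit, so the $\mathbb H_{1/2}$ estimate cannot produce a strict gap. To resolve it I would invoke the Hedenmalm-Lindqvist-Seip identification of $\mathcal M(\mathscr H^2)$ with the algebra of bounded Dirichlet series, whose multiplier norm equals, by Bohr's theorem, the supremum norm over the larger half-plane $\mathbb H_0$. A multiplier of norm at most one thus extends to a holomorphic self-map of $\overline{\mathbb D}$ on all of $\mathbb H_0$, and the Schwarz-Pick denominator $\lambda_1 + \overline{\lambda}_2 - 1$ sharpens to $\lambda_1 + \overline{\lambda}_2$. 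Under the same scaling the sharpened right-hand side is $O(t) \to 0$, whereas the positive semi-definiteness condition still permits $|w_2|^2$ up to $1 - x + o(1) > 0$; fixing any $|w_2|^2 \in (0, 1-x)$ and letting $t \to 0^+$ again yields a positive semi-definite Pick matrix admitting no norm-one multiplier, completing the proof.
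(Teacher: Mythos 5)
Your argument is correct, and it takes a genuinely different route from the paper. The paper isolates a clean discrete criterion (Proposition~\ref{fail-2-point-Pick}): if the Pick matrix $[(1-\overline{w}_i w_j)\kappa_{\mathbf a}(\lambda_j,\lambda_i)]$ is positive semi-definite while the Szeg\H{o}--$\mathbb H_0$ Pick matrix $[\frac{1-\overline{w}_iw_j}{\overline{\lambda}_i+\lambda_j}]$ is not, then the kernel fails the $2$-point Pick property; this rests on the contractive containment of the multiplier algebra in $H^\infty(\mathbb H_0)\cap\mathcal D$ (citing \cite[Theorem~3.1(5)]{St}). It then verifies this at the fixed data $\lambda_1=1$, $\lambda_2=2$, $w_1=0$, $1/3<|w_2|<\sqrt{1-\zeta(3)^2/(\zeta(2)\zeta(4))}$, using numerical values of $\zeta$. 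You instead run a limiting argument $\lambda_j=\tfrac12+\alpha_j t$, $t\to 0^+$, exploiting the pole of $\zeta$ at $1$, and replace the PSD criterion by the classical Schwarz--Pick inequality (equivalent for two points). The two cases you distinguish are instructive: for $m\geq 2$ the gap $x^m<x$ arises from the Szeg\H{o} kernel of $\mathbb H_{1/2}$ alone, so you need nothing beyond the elementary bound $\|\varphi\|_{\infty,\mathbb H_{1/2}}\leq\|\varphi\|_{\mathcal M}$; only at $m=1$ do you invoke the Hedenmalm--Lindqvist--Seip theorem to push to $\mathbb H_0$, where the Schwarz--Pick ratio tends to $0$ and the gap reopens. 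What the paper buys is uniformity across $m$ (one choice of test points, one citation to \cite{St} covering all $\kappa_{\mathbf a}$); what your argument buys is transparency about where the failure comes from --- the simple pole of $\zeta$ dominating the Szeg\H{o} kernel near $1/2$ --- and an elementary treatment of $m\geq 2$ that bypasses the $\mathbb H_0$ extension entirely. One remark: since the containment $\mathcal M(\mathscr H_{\varkappa^m})\subset H^\infty(\mathbb H_0)\cap\mathcal D$ in fact holds for every $m$ (as the paper uses via \cite{St}), your sharpened $\mathbb H_0$ estimate would handle all $m$ at once and the case split could be dropped.
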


In spite of the above failure, the right half-plane $\mathbb H_0$ has a network realization formula. Before we state this, let us fix some notations and recall a few facts. Recall that by \cite[Example~$11.4.1$]{Ap},  
\beq \label{recipro-zeta}
\frac{1}{\zeta(s)} = \sum_{n=1}^\infty \mu(n) n^{-s}, \quad s \in \mathbb H_1,
\eeq
where $\mu$ is the {\it M$\ddot{\mathrm{o}}$bius function} given by
\beqn
\mu(n) = 
\begin{cases}
	1 & \text{if $n = 1$},\\ 
	(-1)^j & \text{if $n$ is a product of $j$ distinct primes},\\
	0  & \text{otherwise}.
\end{cases}
\eeqn
Define the kernel $\kappa_\mu : \mathbb H_{1/2} \times \mathbb H_{1/2} \rar \mathbb C$ by  
\beq
\label{kappa-b}
\kappa_{\mu}(s, u) = \sum_{m=1}^\infty (1+\mu(m)) m^{-s-\overline{u}}, \quad s, u \in \mathbb H_{1/2},
\eeq
and note that $\kappa_\mu$ is a positive semi-definite kernel. 
By abuse of notations, we use the symbol $\kappa_{\mu, u}$ for $\kappa_\mu(\cdot, u),$ $u \in \mathbb H_{1/2},$ 
and $\mathscr H_\mu$ for the reproducing kernel Hilbert space $\mathscr H_{\kappa_\mu}$ associated with $\kappa_\mu.$

\begin{theorem}[Network realization formula]
	\label{de_branges_ker}
Let $\varkappa$ denote the Szegö kernel of the right half-plane $\mathbb H_{1/2}$ and $\kappa_{\mu}$ be given by \eqref{kappa-b}.
	For a function $\varphi: \mathbb H_0 \rar \mathbb C,$ the following statements are equivalent$:$
	\begin{itemize}
		\item [$\mathrm{(i)}$] $\varphi$ is in $H^\infty_1(\mathbb H_0) \cap \mathcal D,$
		\item [$\mathrm{(ii)}$] there exist a Hilbert space $\mathscr K,$ a holomorphic function $\psi : \mathbb H_{1/2} \rar \mathscr K$ and a contraction 
		\beqn
		V = \begin{bmatrix} a & 1 \otimes \beta \\ \gamma \otimes 1 & D \end{bmatrix} : \mathbb{C} \oplus \mathscr H^2 \otimes \mathscr K \rar \mathbb{C} \oplus \mathscr H_{\mu} \otimes \mathscr K
		\eeqn
		such that $V$ is isometry on $\bigvee \{1 \oplus  \varkappa_{\overline{s}} \otimes \psi(s) : s \in \mathbb H_{1/2}\}$, where $a \in \mathbb C,$ $\beta \in \mathscr H^2 \otimes \mathscr K$, $\gamma \in \mathscr H_{\mu} \otimes \mathscr K$ and $D : \mathscr H^2 \otimes \mathscr K \rar \mathscr H_{\mu} \otimes \mathscr K$ is a bounded linear transformation satisfying  
		\beq \label{D-contraction}
		D(\varkappa_{\overline{s}} \otimes \psi(s)) = \kappa_{\mu, \overline{s}} \otimes \psi(s) - \gamma, \quad s \in \mathbb H_{1/2},
\\
		\label{phi-expression}
		\varphi(s) = a + \langle  (T_{\overline{s}} \otimes I - D)^{-1}\gamma, \beta \rangle_{_{\mathscr H^2 \otimes \mathscr K}}, \quad s \in \mathbb H_{1/2}
		\eeq
for some bounded linear transformation $T_{\overline{s}}: \mathscr H^2 \rar \mathscr H_{\mu}$ satisfying $T_{\overline{s}}(\varkappa_{\overline{s}})=\kappa_{\mu, \overline{s}},$ $s \in \mathbb H_{1/2}.$
	\end{itemize}
\end{theorem}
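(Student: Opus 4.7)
The plan is to run the classical lurking-isometry argument that underlies all de Branges--Rovnyak-style transfer function realisations. Two ingredients drive the computation. First, the algebraic identity
\[
\kappa_\mu(s,u)-\varkappa(s,u)\;=\;\frac{1}{\zeta(s+\overline{u})}\;=\;\frac{1}{\varkappa(s,u)},\qquad s,u\in\mathbb H_{1/2},
\]
immediate from \eqref{recipro-zeta} and \eqref{kappa-b}. Second, the Hedenmalm--Lindqvist--Seip theorem identifying $\mathcal M(\mathscr H^2)$ isometrically with $H^\infty(\mathbb H_0)\cap\mathcal D$; this is the analytic bridge that lets a Pick-type positivity statement on $\mathbb H_{1/2}$ translate into a boundedness statement on the larger half-plane $\mathbb H_0$.

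For $(\mathrm{ii})\Rightarrow(\mathrm{i})$ I would first use \eqref{D-contraction} and \eqref{phi-expression}, observing that $(T_{\overline{s}}\otimes I-D)(\varkappa_{\overline{s}}\otimes\psi(s))=\gamma$ forces $\varphi(s)=a+\langle\varkappa_{\overline{s}}\otimes\psi(s),\beta\rangle$, to compute
\[
V\bigl(1\oplus\varkappa_{\overline{s}}\otimes\psi(s)\bigr)\;=\;\varphi(s)\oplus\kappa_{\mu,\overline{s}}\otimes\psi(s),\qquad s\in\mathbb H_{1/2}.
\]
Since $V$ is isometric on the span, equating the Gram matrices of pre-images and images at a pair $(s_j,s_k)$ yields
\[
1+\zeta(s_j+\overline{s_k})\langle\psi(s_j),\psi(s_k)\rangle=\varphi(s_j)\overline{\varphi(s_k)}+\kappa_\mu(\overline{s_k},\overline{s_j})\langle\psi(s_j),\psi(s_k)\rangle,
\]
which via the key algebraic identity collapses to $[(1-\varphi(s_j)\overline{\varphi(s_k)})\zeta(s_j+\overline{s_k})]_{j,k}=[\langle\psi(s_j),\psi(s_k)\rangle]_{j,k}\succeq 0$. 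This is the familiar positivity characterising contractive multipliers of $\mathscr H^2$, so the Hedenmalm--Lindqvist--Seip identification places $\varphi$ in $H^\infty_1(\mathbb H_0)\cap\mathcal D$.

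For $(\mathrm{i})\Rightarrow(\mathrm{ii})$ I would reverse the calculation. The multiplier identification makes $M_\varphi$ a contraction on $\mathscr H^2$, so $(1-\varphi(s)\overline{\varphi(u)})\zeta(s+\overline{u})$ is a positive kernel on $\mathbb H_{1/2}$; Kolmogorov factorisation provides $\mathscr K$ and $\psi:\mathbb H_{1/2}\to\mathscr K$ with $\langle\psi(s),\psi(u)\rangle$ equal to this kernel. Running the Gram-matrix identity in reverse, the assignment
\[
V_0:\;1\oplus\varkappa_{\overline{s}}\otimes\psi(s)\;\longmapsto\;\varphi(s)\oplus\kappa_{\mu,\overline{s}}\otimes\psi(s)
\]
extends to a well-defined isometry of $\bigvee\{1\oplus\varkappa_{\overline{s}}\otimes\psi(s):s\in\mathbb H_{1/2}\}$ into $\mathbb C\oplus\mathscr H_\mu\otimes\mathscr K$. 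Extending $V_0$ by zero on the orthogonal complement gives a contraction $V$ of the stated block form; reading off its entries recovers $a,\beta,\gamma,D$, and $V(1\oplus\varkappa_{\overline{s}}\otimes\psi(s))=\varphi(s)\oplus\kappa_{\mu,\overline{s}}\otimes\psi(s)$ directly supplies both \eqref{D-contraction} and the scalar form $\varphi(s)=a+\langle\varkappa_{\overline{s}}\otimes\psi(s),\beta\rangle$. To recast this as \eqref{phi-expression}, I would select a bounded $T_{\overline{s}}:\mathscr H^2\to\mathscr H_\mu$ with $T_{\overline{s}}(\varkappa_{\overline{s}})=\kappa_{\mu,\overline{s}}$ for which $T_{\overline{s}}\otimes I-D$ is invertible; the identity $(T_{\overline{s}}\otimes I-D)(\varkappa_{\overline{s}}\otimes\psi(s))=\gamma$ then gives the inverted form.

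The main obstacle I anticipate is this last step. Since $D$ is only known to be a contraction, invertibility of $T_{\overline{s}}\otimes I-D$ is not automatic. My strategy is to split $\mathscr H^2=\mathbb C\varkappa_{\overline{s}}\oplus\varkappa_{\overline{s}}^\perp$ and $\mathscr H_\mu=\mathbb C\kappa_{\mu,\overline{s}}\oplus\kappa_{\mu,\overline{s}}^\perp$, defining $T_{\overline{s}}$ to be the prescribed rank-one map $\varkappa_{\overline{s}}\mapsto\kappa_{\mu,\overline{s}}$ on the first summand and a sufficiently large scalar multiple of any unitary isomorphism between the two orthogonal complements on the second; using $\|\varkappa_{\overline{s}}\|^2=\zeta(2\mathrm{Re}\,s)$ and $\|\kappa_{\mu,\overline{s}}\|^2=\zeta(2\mathrm{Re}\,s)+1/\zeta(2\mathrm{Re}\,s)$, an elementary calculation shows the diagonal block contributes inverse norm strictly less than $1$, so that scaling on the complement makes $\|T_{\overline{s}}^{-1}\|<1$ overall and $(T_{\overline{s}}\otimes I-D)^{-1}$ exists by a convergent Neumann series.
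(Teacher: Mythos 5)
Your proposal is correct and follows essentially the same route as the paper: factorize $(1-\varphi(s)\overline{\varphi(u)})\zeta(s+\overline{u})$ via Kolmogorov/Moore, apply the Lurking Isometry Lemma, extend to a partial isometry of the stated block form, and invoke the Hedenmalm--Lindqvist--Seip identification $\mathcal M(\mathscr H^2)=H^\infty(\mathbb H_0)\cap\mathcal D$ to close the loop. Your construction of $T_{\overline{s}}$ via the splitting $\mathbb C\varkappa_{\overline{s}}\oplus\varkappa_{\overline{s}}^\perp$ with $\|\varkappa_{\lambda}\|^2=\zeta(2\sigma)<\zeta(2\sigma)+1/\zeta(2\sigma)=\|\kappa_{\mu,\lambda}\|^2$ is precisely the paper's Lemma~\ref{operator-invertible}, in the streamlined form recorded in the remark following it.
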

In Section~\ref{S2}, we present a proof of Theorem~\ref{cor-fail-2-point-Pick}. This relies on a variant of Pick's lemma for the right half-plane (see Theorem~\ref{pick-theorem-char}).
In Section~\ref{S3}, we prove Theorem~\ref{de_branges_ker}. 
This proof relies on a circle of ideas presented in \cite[Chapter~2]{AMY} (cf. \cite[Theorem 3.4]{Mc-0}). 

\section{Pick's lemma for the right half-plane}\label{S2}

Our proof of Theorem~\ref{cor-fail-2-point-Pick} relies on a variant of Pick's lemma. 
\begin{theorem}\label{pick-theorem-char}
	Let $\lambda_1, \ldots, \lambda_n$ be distinct points in $\mathbb H_{1/2}$ and $w_1, \ldots, w_n \in \mathbb D.$ Suppose that there exists
	$\varphi \in H^{\infty}(\mathbb H_{0}) \cap \mathcal D$ such that $\|\varphi\|_{\infty} \Le 1$ and $\varphi(\lambda_j)=w_j,~j=1, \ldots, n$. Then, we have the following$:$
	\begin{itemize}
		\item [$\mathrm{(i)}$] $[ (1-\overline{w}_iw_j)\zeta(\overline{\lambda}_i+\lambda_j)]_{i,j=1}^n$ is positive semi-definite,  
		\item [$\mathrm{(ii)}$] $\Big[ \frac{1-\overline{w}_iw_j}{
			\overline{\lambda_i}+ \lambda_j}\Big]_{i,j=1}^n$ is positive semi-definite with rank equal to $n.$
	\end{itemize}
\end{theorem}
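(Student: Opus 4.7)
The plan is to handle (i) and (ii) separately, with the principal effort going into the rank assertion in (ii).

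For (i), I would realize $\varphi$ as a contractive multiplier of $\mathscr H^2$. The key input is the Hedenmalm-Lindqvist-Seip characterization of the multiplier algebra, $\mathcal M(\mathscr H^2) = H^\infty(\mathbb H_0) \cap \mathcal D$ isometrically. Thus $M_\varphi$ is a contraction on $\mathscr H^2$, and $M_\varphi^*\varkappa_{\lambda_j} = \overline{w_j}\,\varkappa_{\lambda_j}$. The standard positivity $\langle (I - M_\varphi M_\varphi^*)v, v\rangle \ge 0$ applied to $v = \sum_j c_j \varkappa_{\lambda_j}$ gives
\[
\sum_{i,j} c_i\overline{c_j}\,(1-\overline{w_i}w_j)\,\zeta(\overline{\lambda_i}+\lambda_j) \ge 0
\]
for every choice of scalars $c_i$; replacing $c$ by its conjugate yields the positive semidefiniteness of the matrix in (i).

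For the positive-semidefinite part of (ii), only the membership $\varphi \in H^\infty_1(\mathbb H_0)$ matters. The reproducing kernel $1/(s+\overline u)$ (up to a positive constant) of the Hardy space $H^2(\mathbb H_0)$ has $H^\infty(\mathbb H_0)$ as its multiplier algebra isometrically, so repeating the same multiplier-adjoint computation with this kernel yields the PSD claim. Equivalently, this is the classical Nevanlinna-Pick criterion on $\mathbb H_0$, obtained by pulling back the disk version through a Cayley transform.

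The rank-$n$ assertion, which I expect to be the main obstacle, crucially requires the Dirichlet-series hypothesis. I would argue by contradiction: assume the Pick matrix in (ii) has rank $k < n$. Since the diagonal entries $(1-|w_j|^2)/(2\operatorname{Re}\lambda_j)$ are strictly positive, $k \ge 1$. By the extremal part of classical Nevanlinna-Pick theory on $\mathbb H_0$, a singular Pick matrix of rank $k$ forces the interpolant in $H^\infty_1(\mathbb H_0)$ to be unique and equal to a finite Blaschke product $B$ on $\mathbb H_0$ of degree exactly $k \ge 1$. Hence $\varphi = B$ on $\mathbb H_0$, so $\varphi$ is simultaneously a non-constant rational function and a Dirichlet series $\sum_{n\ge 1} a_n n^{-s}$ on its half-plane of convergence. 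I would rule this out by comparing asymptotics as $\sigma \to +\infty$: if $n_0 \ge 2$ is the smallest index with $a_{n_0} \ne 0$, then $\varphi(\sigma) - a_1 = a_{n_0}\, n_0^{-\sigma}(1 + o(1))$ decays exponentially, whereas any non-constant rational $B$ with limit $c$ at $+\infty$ satisfies $B(\sigma) - c \sim A/\sigma^m$ for some $A \ne 0$ and $m \ge 1$, i.e., only polynomially. The two rates are incompatible, forcing $a_n = 0$ for every $n \ge 2$, so $\varphi$ is constant; but then $B$ is constant, contradicting $k \ge 1$. Therefore the rank equals $n$.
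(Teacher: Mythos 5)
Your proof of (i) and of the positive-semidefiniteness half of (ii) matches the paper's: you use the Hedenmalm--Lindqvist--Seip identification $\mathcal M(\mathscr H^2)=H^\infty(\mathbb H_0)\cap\mathcal D$ for (i), and the Cayley pullback (the content of Lemma~\ref{pick-char-riemann}) for the PSD part of (ii). Where you genuinely diverge is the rank-$n$ assertion. The paper, after reducing via Lemma~\ref{pick-char-riemann} to $\varphi=B\circ C$ for a finite Blaschke product $B$, concludes by citing an unpublished fact (\cite[Corollary~8.4]{Mc-notes}) about the zero sets of nonconstant bounded Dirichlet series: the finite zero set of $B\circ C$ is incompatible with it. You instead rule out $\varphi=B\circ C$ by a self-contained asymptotic comparison on the real axis: $\varphi(\sigma)-a_1$ is $O(n_0^{-\sigma})$ (exponential) from absolute convergence of the Dirichlet series for large $\sigma$, while the rational function $B\circ C$ approaches its boundary value $c=B(1)$ only at a polynomial rate $\sim A\sigma^{-m}$ with $A\ne 0,\ m\ge1$ (since $C(\sigma)-1\sim -2/\sigma$ and $B$ is holomorphic and nonconstant near $1$); matching limits forces $a_1=c$, and the incompatible decay rates then force all higher coefficients to vanish, contradicting that $B$ is nonconstant (which is guaranteed because the diagonal entries $(1-|w_j|^2)/(2\operatorname{Re}\lambda_j)$ are strictly positive, so the rank is at least $1$, and a degree-$0$ Blaschke product cannot take values in $\mathbb D$). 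Your argument is correct and has the advantage of being elementary and avoiding the reliance on McCarthy's unpublished notes; the paper's route is shorter on the page but imports a nontrivial rigidity result about zeros of bounded Dirichlet series.
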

Recall that the {\it Cayley transform} $C : \mathbb H_0 \rar \mathbb D$ is given by 
\beq
\label{psi-r}
C(z) = \frac{z-1}{z+1}, \quad z \in \mathbb H_0.
\eeq
In the proof of Theorem~\ref{pick-theorem-char}, we require the following preparatory lemma. 
\begin{lemma}
	\label{pick-char-riemann}
	Let $\lambda_1, \ldots, \lambda_n$ be distinct points in $\mathbb H_{0}$ and $w_1, \ldots, w_n \in \mathbb D.$ 
	Then, the following statements are equivalent$:$
	\begin{itemize}
		\item [$\mathrm{(i)}$] 
		the Pick matrix 
		$P:=\Big[\frac{1-\overline{w}_iw_j}{
			\overline{\lambda}_i+ \lambda_j}\Big]_{i,j=1}^n$ is positive semi-definite,   
		\item[$\mathrm{(ii)}$] there exists a function $\phi \in H_1^\infty(\mathbb D)$ such that 
		\beq
		\label{interpolation-valid}
		\phi(C(\lambda_i)) = w_i, \quad i=1, \ldots, n.
		\eeq 
	\end{itemize}
Moreover, the Pick matrix $P$ has rank less than $n$ if and only $\phi$ satisfying \eqref{interpolation-valid} is unique. In this case, $\phi$ is a Blaschke product of degree less than $n.$
\end{lemma}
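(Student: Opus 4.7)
The natural plan is to reduce the lemma to the classical Pick lemma on the unit disc via the Cayley transform $C : \mathbb H_0 \to \mathbb D$ given by \eqref{psi-r}. Let $z_i := C(\lambda_i)$; since $C$ is a biholomorphism, the points $z_1, \ldots, z_n$ are distinct elements of $\mathbb D$, and condition $\phi(C(\lambda_i))=w_i$ from \eqref{interpolation-valid} is the disc-side Pick problem $\phi(z_i)=w_i$ for $\phi \in H_1^\infty(\mathbb D)$. The classical Pick theorem (e.g.\ \cite[Theorem~1.3]{A-M}) then furnishes both the existence criterion and the uniqueness/Blaschke-product statement. All that remains is to match the two Pick matrices.

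The key calculation is elementary. For each pair $i,j$, direct expansion gives
\begin{equation*}
1 - \overline{z}_i z_j \;=\; 1 - \frac{\overline{\lambda}_i-1}{\overline{\lambda}_i+1}\cdot\frac{\lambda_j-1}{\lambda_j+1} \;=\; \frac{2(\overline{\lambda}_i+\lambda_j)}{(\overline{\lambda}_i+1)(\lambda_j+1)},
\end{equation*}
so that
\begin{equation*}
\frac{1-\overline{w}_i w_j}{1-\overline{z}_i z_j} \;=\; \frac{\overline{\lambda}_i+1}{\sqrt{2}}\cdot\frac{1-\overline{w}_i w_j}{\overline{\lambda}_i+\lambda_j}\cdot\frac{\lambda_j+1}{\sqrt{2}}.
\end{equation*}
In matrix form, the disc Pick matrix $\bigl[(1-\overline{w}_i w_j)/(1-\overline{z}_i z_j)\bigr]_{i,j=1}^{n}$ equals $D^* P D$, where $D$ is the diagonal matrix with entries $D_{jj}=(\lambda_j+1)/\sqrt{2}$. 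Since $\lambda_j \in \mathbb H_0$ forces $\lambda_j+1\neq 0$, the matrix $D$ is invertible, and hence $P \succeq 0$ if and only if the disc Pick matrix is positive semi-definite; moreover the two matrices have the same rank.

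Combining these two observations finishes the proof. The equivalence $\mathrm{(i)} \Leftrightarrow \mathrm{(ii)}$ is immediate from the classical Pick theorem applied to the points $z_1, \ldots, z_n$ and the values $w_1,\ldots,w_n$. For the moreover part, the classical theorem asserts that the interpolant $\phi$ on the disc is unique precisely when the disc Pick matrix is singular, in which case $\phi$ is a finite Blaschke product whose degree equals the rank of that matrix. Transferring this via the diagonal similarity $P \mapsto D^*PD$ yields exactly the stated conclusion: $P$ has rank less than $n$ iff $\phi$ satisfying \eqref{interpolation-valid} is unique, and in that case $\phi$ is a Blaschke product of degree less than $n$. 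There is no substantial obstacle; the only genuine content is the denominator identity above, after which everything is a transparent rewriting of the classical result.
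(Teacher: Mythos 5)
Your proof is correct, and it follows the same basic reduction-to-the-disc strategy as the paper, centered on the same elementary identity relating $\overline{\lambda}_i+\lambda_j$ to $1-\overline{z}_iz_j$. The difference is in how you package the relationship between the two Pick matrices. The paper writes the disc Pick matrix $Q$ as the Schur (entrywise) product $P\diamond R$ with the rank-one matrix $R=\bigl[(\overline{\lambda}_i+1)(\lambda_j+1)/2\bigr]_{i,j}$, and then invokes the Schur product theorem to transfer positive semi-definiteness and Ballantine's theorem to transfer the rank. You instead observe that a Schur product against a rank-one Gram matrix is exactly a diagonal congruence, $Q=D^*PD$ with $D=\mathrm{diag}\bigl((\lambda_j+1)/\sqrt{2}\bigr)$ invertible, so both the PSD-equivalence and the rank equality fall out at once from elementary linear algebra, with no need to cite either of those theorems. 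This is a modest but genuine streamlining: same computation, fewer external references, and the rank argument becomes transparent rather than borrowed.

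One small presentational caveat: after establishing the matrix identity, you should make sure the classical Pick theorem you cite actually states the uniqueness and finite-Blaschke-product conclusion in the form you quote. In \cite[Theorem~1.3]{A-M} it does; just be explicit that you are applying it to the distinct points $z_i=C(\lambda_i)\in\mathbb D$ (distinctness following from $C$ being injective), so the hypotheses match. With that noted, your argument is complete.
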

\begin{proof}
Let 
$Q: = \big[\frac{1-\overline{w}_iw_j}{1- \overline{C(\lambda_i)}C(\lambda_j)}\big]_{i,j=1}^n$ and $R := \big[\frac{(\overline{\lambda}_i+1)(\lambda_j +1)}{2}\big]_{i,j=1}^n.$ 
A routine calculation using \eqref{psi-r} 
shows that $Q=P \diamond R,$ 
where $\diamond$ denotes the Schur product. Since
$R$ is a rank one positive semi-definite matrix with invertible entries, by the Schur product theorem (see \cite[Theorem 10.18]{AMY}), 
$P \succeq 0$ if and only $Q \succeq 0.$ 
Also, by Ballantine's theorem (see \cite[Theorem~3.2]{Sty}), $\text{rank}(Q) \leq \text{rank}(P)\, \text{rank}(R)$. However, since $\text{rank}(R)=1,$ $\text{rank}(Q) \leq \text{rank}(P).$ A similar argument with $R$ replaced by $\big[\frac{2}{(\overline{\lambda}_i+1)(\lambda_j +1)}\big]_{i,j=1}^n,$ shows that $\text{rank}(P) \leq \text{rank}(Q).$   
The equivalence of (i) and (ii) as well as the remaining part now follow from \cite[Theorem~1.3]{A-M}.
\end{proof}

For a positive integer $n,$ let $p_n$ denote the $n$-th prime in an increasing enumeration of prime numbers. Set $\mathcal P_n := \{2^{m_1}3^{m_2} \cdots p_n^{m_n}: m_1, \ldots, m_n \Ge 0\},$ $n \Ge 1.$ 

The following result provides a sufficient condition ensuring that a diagonal kernel does not have the $2$-point scalar Pick property.
\begin{proposition}\label{fail-2-point-Pick}
Let $\mf a = \{a_n\}_{n=1}^\infty$ be a sequence of positive real numbers such that for every $\sigma > 0,$
\beqn
\sum_{j \in \mathcal P_n} a_j j^{-\sigma} < \infty, \quad  n \Ge 1. 	
\eeqn
Let $\kappa_\mf a:\mathbb H_{1/2} \times \mathbb H_{1/2} \rar\ \mathbb C$ be a positive semi-definite kernel given by
\beqn
\kappa_{\mf a}(s, u) = \sum_{m=1}^\infty a_m m^{-s-\overline{u}}, \quad s, u \in \mathbb H_{1/2}.
\eeqn
Assume that there exist distinct points $\lambda_1, \lambda_2$ in $\mathbb H_{1/2}$ and $w_1, w_2 \in \mathbb D$ such that  
	\beq
\label{check}
	\text{$[(1-\overline{w}_iw_j) \kappa_\mf a(\lambda_j, \lambda_i)]_{i,j=1}^2 \succeq 0$ and $\Big[\frac{1-\overline{w}_iw_j}{
			\overline{\lambda_i}+ \lambda_j}\Big]_{i,j=1}^2 \nsucceq 0$}.
	\eeq 
Then $\kappa_\mf a$ does not have the $2$-point scalar Pick property.
\end{proposition}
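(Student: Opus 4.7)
My approach is proof by contradiction. Suppose $\kappa_{\mathbf{a}}$ has the $2$-point scalar Pick property. The first condition in \eqref{check} then produces a multiplier $\varphi \in \mathcal{M}(\mathscr{H}_{\kappa_{\mathbf{a}}})$ of norm at most one satisfying $\varphi(\lambda_j) = w_j$ for $j = 1, 2$. The goal is to upgrade $\varphi$ to a function in $H_1^\infty(\mathbb{H}_0) \cap \mathcal{D}$, for then Theorem~\ref{pick-theorem-char} forces the matrix $\big[(1-\overline{w}_i w_j)/(\overline{\lambda}_i + \lambda_j)\big]_{i,j=1}^{2}$ to be positive semi-definite, directly contradicting the second condition in \eqref{check}.

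The preparatory portion is routine. Since $a_1 > 0$, the constant function $\mathbf{1}$ belongs to $\mathscr{H}_{\kappa_{\mathbf{a}}}$ (with $\|\mathbf{1}\|^2 = 1/a_1$), so $\varphi = M_\varphi \mathbf{1}$ also lies in $\mathscr{H}_{\kappa_{\mathbf{a}}}$. Every element of this reproducing kernel Hilbert space admits a Dirichlet series representation convergent on $\mathbb{H}_{1/2}$, which places $\varphi$ in $\mathcal{D}$ with expansion $\varphi(s) = \sum_{m=1}^{\infty} \varphi_m m^{-s}$. The eigenvector identity $M_\varphi^* \kappa_{\mathbf{a}, s} = \overline{\varphi(s)}\, \kappa_{\mathbf{a}, s}$ combined with $\|M_\varphi^*\| = \|M_\varphi\| \leq 1$ yields $|\varphi(s)| \leq 1$ throughout $\mathbb{H}_{1/2}$, and applying $M_\varphi$ to each atom $k^{-s}$ gives the useful refined inequalities $\sum_m |\varphi_m|^2/a_{km} \leq 1/a_k$ for every $k \geq 1$.

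The main obstacle is to extend $\varphi$ to a bounded holomorphic function on the larger half-plane $\mathbb{H}_0$; this does not follow from mere boundedness on $\mathbb{H}_{1/2}$, and it is precisely where the convergence hypothesis $\sum_{j \in \mathcal{P}_n} a_j j^{-\sigma} < \infty$ (for every $\sigma > 0$ and $n \geq 1$) is brought to bear. The natural route is via Bohr's correspondence $p_k^{-s} \leftrightarrow z_k$: the finite-prime truncations $\varphi^{(n)}(s) := \sum_{m \in \mathcal{P}_n} \varphi_m m^{-s}$ of $\varphi$ transfer to holomorphic functions on the polydisc $\mathbb{D}^n$ of supremum norm at most one (coming from the multiplier bound), while the summability hypothesis on $\{a_j\}_{j \in \mathcal{P}_n}$ guarantees that these same truncations simultaneously define bounded Dirichlet series on $\mathbb{H}_0$. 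A limiting argument as $n \to \infty$ then recovers $\varphi$ as a function in $H_1^\infty(\mathbb{H}_0) \cap \mathcal{D}$, still satisfying $\varphi(\lambda_j) = w_j$.

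With this $\varphi$ in hand, Theorem~\ref{pick-theorem-char}(ii) immediately delivers the desired positive semi-definiteness, closing the contradiction. The delicate step is the Bohr-type transfer described in the previous paragraph: it is a weighted analogue of the Hedenmalm--Lindqvist--Seip identification of $\mathcal{M}(\mathscr{H}^2)$ with the bounded Dirichlet series on $\mathbb{H}_0$, and the hypothesis on the sequence $\{a_n\}$ is exactly the ingredient that makes the weighted version go through.
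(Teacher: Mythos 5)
Your argument follows essentially the same route as the paper, written in contrapositive form. Both proofs hinge on two ingredients: (a) the contractive inclusion $\mathcal M(\mathscr H_{\kappa_{\mathbf a}}) \hookrightarrow H^\infty(\mathbb H_0)\cap\mathcal D$, and (b) the transfer, via the Cayley transform, to the classical Nevanlinna--Pick criterion on the disc. The paper presents this in the direct form: the second condition in \eqref{check}, together with Lemma~\ref{pick-char-riemann} and the inclusion, shows no norm-one multiplier interpolates, even though the Pick matrix is PSD. You instead assume the Pick property, produce a multiplier, upgrade it to $H_1^\infty(\mathbb H_0)\cap\mathcal D$, and feed it to Theorem~\ref{pick-theorem-char}(ii). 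Logically these are the same argument.

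The one place where you genuinely diverge from the paper is in the treatment of the inclusion (a). The paper simply cites Stetler's theorem \cite[Theorem~3.1(5)]{St}; you instead sketch a proof of it via the Bohr lift. Your sketch is in the right spirit but leaves the key step at the level of a plan. To make it rigorous you would need: (i) that the truncation onto $\mathcal P_n$ is multiplier-norm contractive, i.e.\ $\|M_{\varphi^{(n)}}\|_{\mathcal M(\mathscr H_{\kappa_{\mathbf a}}^{(n)})}\le\|M_\varphi\|$ --- this works because $\mathcal P_n$ is closed under taking divisors and products, so the orthogonal projection $P_n$ intertwines $M_\varphi$ with $M_{\varphi^{(n)}}$ on the truncated space; (ii) that the hypothesis $\sum_{j\in\mathcal P_n}a_j j^{-\sigma}<\infty$ for every $\sigma>0$ in fact forces the Bohr-lifted kernel $\sum a_{p_1^{m_1}\cdots p_n^{m_n}}(z_1\bar w_1)^{m_1}\cdots(z_n\bar w_n)^{m_n}$ to converge throughout $\mathbb D^n$, not merely on the diagonal curve $r_i=p_i^{-\sigma}$; this step uses the positivity of the $a_j$ (choose $\sigma>0$ small enough that $p_i^{-\sigma}\ge\rho_i$ for each $i$, then compare); and (iii) a normal-families argument to pass to the limit in $n$ and identify the limit with $\varphi$ on $\mathbb H_{1/2}$. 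None of these points is wrong, but they are asserted rather than established. Since the paper contents itself with a citation here, your proposal is, in effect, a correct (if incomplete) expansion of the same underlying argument, not an alternative one.
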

\begin{proof}
	By Lemma~\ref{pick-char-riemann}, there is no function $h$ in $H_1^\infty(\mathbb D)$ such that $h(C(\lambda_i)) = w_i,~i=1, 2.$ Since 
	\beqn
	H_1^\infty(\mathbb H_0) = \{f\circ C : f \in H_1^\infty(\mathbb D)\},
	\eeqn
	we infer that there does not exist a function $g$ in $H_1^\infty(\mathbb H_{0})$ such that $g(\lambda_i) = w_i,~i=1, 2$. Hence, there is no Dirichlet series in $H_1^\infty(\mathbb H_0)$ such that $g(\lambda_i) = w_i,~i=1, 2.$ Since the multiplier algebra of $\mathscr H(\kappa_{\mf a})$ is contractively contained in $H^\infty(\mathbb H_0) \cap \mathcal D$ (see \cite[Theorem~3.1(5)]{St}), we conclude that $\kappa_\mf a$ does not have $2$-point scalar Pick property.
\end{proof}

Let us recall approximate values of $\zeta$ at certain points which are needed later on (refer to \cite[p.~811]{A-S}):
	\beq \label{zeta-values}
&	\zeta(2) \approx 1.6449, \, \zeta(3) \approx 1.2020, \, \zeta(4) \approx 1.0823, \notag\\ 
&	\zeta(7) \approx 1.0083, \, \zeta(12) \approx 1.0002.
\eeq
In particular, we have
\beq \label{values-zeta}
\frac{\zeta(3)^2}{\zeta(2)\zeta(4)} < \frac{8}{9}.
\eeq 
As a consequence of Proposition~\ref{fail-2-point-Pick}, we prove that the Szegö-Dirichlet kernel of the right half-plane $\mathbb H_{1/2}$ does not have the $2$-point scalar Pick property. 
\begin{proof}[Proof of Theorem~\ref{cor-fail-2-point-Pick}]
Fix a positive integer $m.$ 
By \cite[Equation~(1.2.2)]{Ti}, we may write $\varkappa^m$ as 
\beqn
\varkappa^m(s,u) = \sum_{n=1}^\infty a_m(n)n^{-s-\overline{u}}, \quad s, u \in \mathbb H_{1/2}
\eeqn
for some scalars $a_m(n),$ 
then by \cite[Lemma~3.2]{St},
\beqn
\sum_{j \in \mathcal P_n} a_m(j)j^{-\sigma} = \Big(\sum_{j \in \mathcal P_n} j^{-\sigma}\Big)^m < \infty, \quad  n \Ge 1
\eeqn
for every $\sigma > 0.$ Hence, in view of Proposition~\ref{fail-2-point-Pick}, it now suffices to check that \eqref{check} holds for some choices of distinct points $\lambda_1, \lambda_2$ in $\mathbb H_{1/2}$ and $w_1, w_2 \in \mathbb D$.
To see that, let $\lambda_1 = 1, \lambda_2 = 2, w_1 = 0$ and $w_2 \in \mathbb C$ such that 
\beq
\label{choice-w2}
1/3 < |w_2| < \sqrt{1- \frac{\zeta(3)^2}{\zeta(2)\zeta(4)}}
\eeq (such a $w_2$ exists in view of \eqref{values-zeta}). Note that
\beqn
\Big[(1-\overline{w_i}w_j)\varkappa^m(\lambda_i,\lambda_j)\Big]_{i,j=1}^2 = \begin{pmatrix}
	\zeta(2)^m  & \zeta(3)^m \\
	\zeta(3)^m  & (1-|w_2|^2)\zeta(4)^m
\end{pmatrix},  
\eeqn
which is positive semi-definite as its diagonal entries are positive and its determinant is equal to
\beqn
\zeta(2)^m(1-|w_2|^2)\zeta(4)^m - \zeta(3)^{2m}  \Ge \zeta(3)^{2m}\Big(\frac{\zeta(2)\zeta(4)}{\zeta(3)^2}(1-|w_2|^2)-1\Big) > 0
\eeqn
(in view of \eqref{values-zeta} and \eqref{choice-w2}).
Furthermore, since $|w_2| > 1/3,$ the matrix 
\beqn
\Big[\frac{1-\overline{w}_iw_j}{
			\overline{\lambda_i}+ \lambda_j}\Big]_{i,j=1}^2 = \begin{pmatrix}
		\frac{1}{2}  & \frac{1}{3} \\
		 \frac{1}{3} & \frac{1-|w_2|^2}{4} 
	\end{pmatrix}  
\eeqn
has determinant less than $0.$ Thus \eqref{check} holds, and hence $\varkappa^m$ does not have the $2$-point scalar Pick property.
\end{proof}

\begin{proof}[Proof of Theorem~\ref{pick-theorem-char}]
	Suppose that there exists
	$\varphi$ in $H_1^{\infty}(\mathbb H_{0}) \cap \mathcal D$ such that $\varphi(\lambda_j)=w_j,~j=1, \ldots, n.$ 
However, by \cite[Theorem~3.1]{HLS}, $\mathcal M(\mathscr H^2) = H^\infty(\mathbb H_0) \cap \mathcal D$ (isometrically), and hence $\varphi$ is in the closed unit ball of  $\mathcal M(\mathscr H^2).$ Hence, by \cite[Theorem~5.2]{A-M}, the condition (i) follows. Moreover, the function $h: =\varphi\circ C^{-1}$ belongs to $H_1^\infty(\mathbb D)$ and satisfies $h(C(\lambda_i)) = w_i$ for all $i =1, 2, \ldots, n.$ 
One may now apply Lemma~\ref{pick-char-riemann} to obtain $\Big[ \frac{1-\overline{w}_iw_j}{
			\overline{\lambda_i}+ \lambda_j}\Big]_{i,j=1}^n \succeq 0$.
If this matrix has rank less than $n,$ then by Lemma~\ref{pick-char-riemann}, $\varphi = B \circ C$ for some finite Blaschke product $B$ and hence, $B \circ C \in H^\infty(\mathbb H_0) \cap \mathcal D.$ Since $C$ is a biholomorphism and the zero set of $B$ is finite, this contradicts \cite[Corollary 8.4]{Mc-notes}, completing the proof.	
\end{proof}

\begin{remark}
	Note that both conditions $(\mathrm{i})$ and $(\mathrm{ii})$ in Theorem~\ref{pick-theorem-char} are independent. It is evident from the proof of Proposition~\ref{fail-2-point-Pick} that condition $(\mathrm{i})$ does not necessarily imply condition $(\mathrm{ii})$. 
In general, the condition $(\mathrm{ii})$ also does not imply condition $(\mathrm{i})$.  Indeed, consider $\lambda_1 = 1, \lambda_2 = 6, w_1 = 0$ and $w_2 = \frac{1}{\sqrt{2}}.$ Then, 
	\beqn
	\left[ \frac{1-\overline{w}_iw_j}{
		\overline{\lambda_i}+ \lambda_j}\right]_{i,j=1}^2 = \begin{pmatrix}
		\frac{1}{2} & \frac{1}{7} \\[5pt]
		\frac{1}{7} & \frac{1}{24}
	\end{pmatrix} \succeq 0.
	\eeqn
	However, the matrix
$[ (1-\overline{w}_iw_j)\zeta(\overline{\lambda}_i+\lambda_j)]_{i,j=1}^2 = 
	\left(\begin{smallmatrix}
		\zeta(2) & \zeta(7) \\[5pt]
		\zeta(7) & \frac{\zeta(12)}{2}
	\end{smallmatrix}\right)
$
	is not positive semi-definite, since
	$\zeta(2)\zeta(12) <  2\,\zeta(7)^2$ $($see \eqref{zeta-values}$).$
\end{remark}

\section{A network realization formula} \label{S3}

We need the following lemma in a proof of Theorem~\ref{de_branges_ker}. 

\begin{lemma}
	\label{operator-invertible} 
For every $\lambda \in \mathbb H_{1/2}$ and 
for any auxiliary Hilbert space $\mathcal K$ and a contraction $D:\mathscr H^2 \otimes \mathcal K \rar \mathscr H_{\mu} \otimes \mathcal K,$ 
there exists a bounded linear operator $T_{\lambda} : \mathscr H^2 \rar \mathscr H_\mu$ such that  $T_{\lambda}(\varkappa_\lambda)=\kappa_{\mu, \lambda}$ and $T_\lambda \otimes I - D$ is bounded and invertible, where $T_\lambda \otimes I$ denotes the tensor product of $T_\lambda$ and the identity operator $I$ on $\mathcal K.$   
\end{lemma}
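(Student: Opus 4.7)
The plan is to arrange for $T_\lambda$ itself to be bounded below by a constant strictly greater than $1$; then the tensor operator $T_\lambda\otimes I$ inherits the same lower bound, and a one-line Neumann-series argument against the contraction $D$ finishes the job. The single constraint $T_\lambda\varkappa_\lambda=\kappa_{\mu,\lambda}$ pins down $T_\lambda$ only on a one-dimensional subspace, so there is enormous freedom to engineer this lower bound, provided the two kernel vectors $\varkappa_\lambda$ and $\kappa_{\mu,\lambda}$ compare in a favorable way.

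The key quantitative point is a norm comparison via \eqref{recipro-zeta}. Since $\sum_{m\ge 1}(1+\mu(m))m^{-s} = \zeta(s) + \zeta(s)^{-1}$ on $\mathbb H_1$, one gets
\[
\|\varkappa_\lambda\|^2 = \zeta(2\re\lambda),\qquad \|\kappa_{\mu,\lambda}\|^2 = \zeta(2\re\lambda) + \zeta(2\re\lambda)^{-1}
\]
for every $\lambda\in\mathbb H_{1/2}$, so the ratio $\rho := \|\varkappa_\lambda\|/\|\kappa_{\mu,\lambda}\|$ is strictly less than $1$ (as $\zeta>0$ on $(1,\infty)$). I would then set $\mathcal M := \mathbb C\varkappa_\lambda \subseteq \mathscr H^2$ and $\mathcal N := \mathbb C\kappa_{\mu,\lambda} \subseteq \mathscr H_\mu$, pick any unitary $V:\mathcal M^\perp\to\mathcal N^\perp$ (both orthogonal complements are separable infinite-dimensional Hilbert spaces of Dirichlet series), and define
\[
T_\lambda(\alpha\,\varkappa_\lambda + y) := \alpha\,\kappa_{\mu,\lambda} + 2\,V y,\qquad \alpha\in\mathbb C,\ y\in\mathcal M^\perp.
\]
The interpolation condition $T_\lambda\varkappa_\lambda=\kappa_{\mu,\lambda}$ is immediate, and since $T_\lambda$ is block-diagonal its inverse exists with $\|T_\lambda^{-1}\| = \max\{\rho,\,1/2\} < 1$.

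To finish, observe that $(T_\lambda\otimes I)^{-1} = T_\lambda^{-1}\otimes I$ has norm equal to $\|T_\lambda^{-1}\| < 1$, so $\|(T_\lambda^{-1}\otimes I)\,D\| \le \|T_\lambda^{-1}\|\,\|D\| < 1$. A Neumann series then makes $I - (T_\lambda^{-1}\otimes I)D$ boundedly invertible, and the factorization
\[
T_\lambda\otimes I - D = (T_\lambda\otimes I)\bigl(I - (T_\lambda^{-1}\otimes I)D\bigr)
\]
delivers the required bounded inverse of $T_\lambda\otimes I - D$. The only point I expect to require some care is the identity $\|\kappa_{\mu,\lambda}\|^2 = \zeta(2\re\lambda) + \zeta(2\re\lambda)^{-1}$, which rests on \eqref{recipro-zeta} and the positivity of $\zeta$ on $(1,\infty)$; everything past that is standard operator-theoretic bookkeeping, and notably the resulting $T_\lambda$ depends only on $\lambda$ and not on the chosen contraction $D$.
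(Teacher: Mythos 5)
Your proposal is correct and follows essentially the same route as the paper: decompose along $\mathbb C\varkappa_\lambda$, put the forced map on that line and a scaled unitary on the orthogonal complement, verify $\|T_\lambda^{-1}\|<1$ via the norm comparison $\|\varkappa_\lambda\|^2=\zeta(2\sigma)<\zeta(2\sigma)+\zeta(2\sigma)^{-1}=\|\kappa_{\mu,\lambda}\|^2$, and invert $T_\lambda\otimes I-D$ by a Neumann series. The only cosmetic differences are your fixed scalar $2$ in place of the paper's generic $|\alpha|>1$ and your direct norm-ratio bound, which matches the simplification the paper itself records in the remark following the lemma.
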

\begin{proof}
For $\lambda = \sigma + it \in \mathbb H_{1/2}$, let $\mathbb C\varkappa_{\lambda}$ denote the subspace of $\mathscr H^2$ spanned by the vector $\varkappa_{\lambda}$. Define the linear mapping $V_\lambda : \mathbb C\varkappa_{\lambda} \rar  \mathbb C\kappa_{\mu, \lambda}$ by setting    
$V_\lambda(\varkappa_{\lambda}) = \kappa_{\mu, \lambda}.$ 
Let $U_\lambda: (\mathbb C\varkappa_{\lambda})^\perp \rar  (\mathbb C\kappa_{\mu, \lambda})^{\perp}$
	be
	any unitary (such a unitary exists since $(\mathbb C\varkappa_{\lambda})^\perp$ and $(\mathbb C\kappa_{\mu, \lambda})^{\perp}$ are infinite dimensional separable Hilbert spaces). For fixed $\alpha \in \mathbb C$ with $|\alpha| > 1$, 
define $T_\lambda: = V_\lambda \oplus \alpha {U}_\lambda$.  
	Since $\alpha U_\lambda$ is invertible, $T_\lambda$ is a bounded invertible operator. 
	We now verify that $\|T_\lambda^{-1}\| < 1.$ To see this, 
	let $\tilde{\epsilon} > 0$ be given by
	\beq \label{epsilon-til-form}
	\tilde{\epsilon}^2 = \frac{\zeta(2\sigma)^2 + 1/2}{\zeta(2\sigma)^2 + 1}.
	\eeq 
	Then,
	\beqn
	\|V_\lambda^{-1} (\kappa_{\mu, \lambda})\|^2 = \| \varkappa_{\lambda}\|^2 =  \zeta(2\sigma)
	&<& \frac{1}{\zeta(2\sigma)} \Big(\zeta(2\sigma)^2 + \frac{1}{2}\Big) \\
	&\overset{\eqref{epsilon-til-form}}=&\frac{\tilde{\epsilon}^2}{\zeta(2\sigma)} \Big(\zeta(2\sigma)^2+1\Big)\\
	&\overset{\eqref{recipro-zeta}}=& \tilde{\epsilon}^2 \Big(\sum_{n=1}^\infty (1+\mu(n))n^{-2\sigma}\Big)\\ 
	&=& \tilde{\epsilon}^2\|\kappa_{\mu, \lambda}\|^2,  
	\eeqn
	and hence, $\|V_\lambda^{-1}\| \Le \tilde{\epsilon}.$
    Thus, since $T_\lambda^{-1} = V_\lambda^{-1} \oplus \alpha^{-1}U_\lambda^{-1}$, we get
	\beq 
	\label{norm<1}
	\|T_\lambda^{-1}\| = \sup\{\|V_\lambda^{-1}\|, |\alpha|^{-1}\|\} \Le \sup\{\tilde{\epsilon}, |\alpha|^{-1}\| \} \overset{\eqref{epsilon-til-form}}< 1.
	\eeq
Since $T_\lambda$ is invertible, so is $T_\lambda \otimes I$, and hence
	\beqn
	\|(T_\lambda \otimes I)^{-1} D \| = \|(T_\lambda^{-1} \otimes I) D \| \Le \|T_\lambda^{-1}\| \|D\| \overset{\eqref{norm<1}}< 1.
	\eeqn
	Therefore,
	\beqn
	T_\lambda \otimes I - D = (T_\lambda \otimes I) (I - (T_\lambda^{-1} \otimes I) D)
	\eeqn
	is bounded and invertible. 	This completes the proof.
\end{proof}
\begin{remark}
Although the line preceding \eqref{norm<1} provides a slightly sharper estimate for $\|V_\lambda^{-1}\|,$ 
in response to the referee's suggestion, we note that the estimate $\|V_\lambda^{-1}\| < 1,$ as required in the above proof, can be directly deduced from the following$:$
\beqn
	\| \varkappa_{\lambda}\|^2 =  \varkappa(\lambda, \lambda)=\zeta(2\sigma) < \zeta(2\sigma)+\frac{1}{\zeta(2\sigma)}=\kappa_{\mu}(\lambda, \lambda)=\|\kappa_{\mu, \lambda}\|^2. 
\eeqn
\end{remark}

Given a Dirichlet series $\varphi \in H^{\infty}(\mathbb H_{0})$ of norm at most $1,$ the kernel
\beqn
\kappa_{\varphi}(s, u): = (1 - \varphi(s)\overline{\varphi(u)})\varkappa(s, u), \quad s, u \in \mathbb H_{1/2}
\eeqn
is positive semi-definite. Since the operator $\mathscr M_{\varphi}$ of multiplication by $\varphi$ defines a bounded linear operator on $\mathscr H^2$ such that $\|M_{\varphi}\| = \|\varphi\|_{\infty}$ (see \cite[Theorem~3.1]{HLS}), this may be deduced from \cite[Theorem 5.21]{P-R}. 

\begin{proof}[Proof of Theorem~\ref{de_branges_ker}] 
$(\text{i})\Rightarrow(\text{ii}):$
Suppose that $\varphi \in H_1^{\infty}(\mathbb H_{0}) \cap \mathcal D.$ Since $\kappa_\varphi$ is positive semi-definite, by \eqref{recipro-zeta}, 
	\beqn 
	1 - \varphi(s)\overline{\varphi(u)} = \kappa_{\varphi}(s, u)\sum_{n = 1}^{\infty} 
	\mu(n) n^{-s-\overline{u}}, \quad s, u \in \mathbb H_{1/2}.
	\eeqn
Thus, for $s, u \in \mathbb H_{1/2},$
	\beq
	\label{implication-model} 
	1 +  \zeta(s+\overline{u})\kappa_{\varphi}(s, u) = \varphi(s)\overline{\varphi(u)} + \kappa_{\mu}(s, u)  \kappa_{\varphi}(s, u). 
	\eeq
Since $\kappa_\varphi$ is positive semi-definite, by Moore's theorem, 
\beqn
\kappa_{\varphi}(s, u)=\inp{\psi(s)}{\psi(u)}_{_{\mathscr K}}, \quad s, u \in \mathbb H_{1/2}
\eeqn
	for a Hilbert space $\mathscr K=\bigvee \{\psi(u) : u \in \mathbb H_{1/2}\}$ and a holomorphic function $\psi : \mathbb H_{1/2} \rar \mathscr K.$
	It now follows from \eqref{implication-model} that 
	\beqn
	&&1 + \langle  \varkappa_{\overline{s}} \otimes \psi{(s)} ,  \varkappa_{\overline{u}} \otimes \psi{(u)} \rangle_{\mathscr H^2 \otimes \mathscr K}\\
	=  &&\varphi(s)\overline{\varphi(u)} + \langle \kappa_{\mu, \overline{s}} \otimes \psi(s), \kappa_{\mu, \overline{u}} \otimes \psi(u) \rangle_{{\mathscr H_{\mu}} \otimes \mathscr K},
	\eeqn
	which may be rewritten as 
	\beqn
	&&	\left\langle \begin{pmatrix} 1 \\ \varkappa_{\overline{s}} \otimes \psi(s) \end{pmatrix},  \begin{pmatrix} 1 \\  \varkappa_{\overline{u}} \otimes \psi(u) \end{pmatrix}\right\rangle_{\mathbb{C} \oplus \mathscr H^2 \otimes \mathscr K} \\ &=& \left \langle \begin{pmatrix} {\varphi(s)} \\  \kappa_{\mu, \overline{s}} \otimes \psi(s) \end{pmatrix},  \begin{pmatrix} {\varphi(u)} \\  \kappa_{\mu, \overline{u}} \otimes \psi(u) \end{pmatrix}\right\rangle_{\mathbb{C} \oplus \mathscr H_{\mu} \otimes \mathscr K}, \quad s, u \in \mathbb H_{1/2}.
	\eeqn
	Therefore, by Lurking isometry lemma (see \cite[Lemma~2.18]{AMY}), there exists a linear isometry $V$ from the linear span of $\{1 \oplus  \varkappa_{\overline{s}} \otimes \psi(s): s \in \mathbb H_{\frac{1}{2}}\}$ into $\mathbb{C} \oplus \mathscr H_{\mu} \otimes \mathscr K$
	such that
	\beq \label{def-V}
	V \begin{pmatrix} 1 \\  \varkappa_{\overline{s}} \otimes \psi(s) \end{pmatrix} = \begin{pmatrix} {\varphi(s)} \\  \kappa_{\mu, \overline{s}} \otimes \psi(s) \end{pmatrix}, \quad s \in \mathbb H_{\frac{1}{2}}.
	\eeq
	Extending $V$ trivially on
	\beqn
	\Big(\bigvee \{1 \oplus  \varkappa_{\overline{s}} \otimes \psi(s) : s \in \mathbb H_{\frac{1}{2}}\}\Big)^{\perp}
	\eeqn
	yields a partial isometry $\tilde{V} : \mathbb{C} \oplus \mathscr H^2 \otimes \mathscr K \rar \mathbb{C} \oplus \mathscr H_{\mu} \otimes \mathscr K.$ If we represent $\tilde{V}$ as a $2\times2$ block matrix, then
	\beqn
	\tilde{V} = \begin{bmatrix} a & 1 \otimes \beta \\ \gamma \otimes 1 & D \end{bmatrix} : \mathbb{C} \oplus \mathscr H^2 \otimes \mathscr K \rar \mathbb{C} \oplus \mathscr H_{\mu} \otimes \mathscr K,
	\eeqn
	where $a \in \mathbb C,$ $\beta \in \mathscr H^2 \otimes \mathscr K,$  $\gamma \in  \mathscr H_{\mu} \otimes \mathscr K$ and $D : \mathscr H^2 \otimes \mathscr K \rar \mathscr H_{\mu} \otimes \mathscr K$ is a bounded linear transformation. This together with \eqref{def-V} yields 
	\beq
	\label{realization-matrix-action}
	\begin{pmatrix} {\varphi(s)} \\  \kappa_{\mu, \overline{s}} \otimes \psi(s) \end{pmatrix} &=& 	\tilde{V}\begin{pmatrix} 1 \\  \varkappa_{\overline{s}} \otimes \psi(s) \end{pmatrix}  \notag\\
	&=&  \begin{bmatrix} a & 1 \otimes \beta \\ \gamma \otimes 1 & D \end{bmatrix}\begin{pmatrix} 1 \\  \kappa_{\overline{s}} \otimes \psi(s) \end{pmatrix} \notag\\
	&=& \begin{pmatrix} a + \langle  \varkappa_{\overline{s}} \otimes \psi(s), \beta  \rangle_{_{\mathscr H^2 \otimes \mathscr K}} \\  \gamma +  D ( \varkappa_{\overline{s}} \otimes \psi(s)) \end{pmatrix}, \quad s \in \mathbb H_{1/2}.
	\eeq
By Lemma~\ref{operator-invertible}, for every $s \in \mathbb H_{1/2},$ there exists a bounded linear operator $T_{s} : \mathscr H^2 \rar \mathscr H_{\mu}$ such that  $T_{s}(\varkappa_s)=\kappa_{\mu, s}$ and $T_s \otimes I - D$ is invertible.  
It now follows from \eqref{realization-matrix-action} that 
	\beqn
	(T_{\overline{s}} \otimes I - D)(\varkappa_{\overline{s}} \otimes \psi(s)) = \gamma, 
	\eeqn
and hence 
	\beqn
	\varkappa_{\overline{s}} \otimes \psi(s) = (T_{\overline{s}} \otimes I - D)^{-1}\gamma, \quad  s \in \mathbb H_{1/2}.
	\eeqn
	This combined with \eqref{realization-matrix-action} gives the desired formula for $\varphi$ in \eqref{phi-expression}.
	
	$(\text{ii})\Rightarrow(\text{i}):$ 
	By \eqref{D-contraction},
	\beq
	\label{gamma-expression}
	\gamma = \kappa_{\mu, \overline{s}} \otimes \psi(s) - D( \varkappa_{\overline{s}} \otimes \psi(s)) = (T_{\overline{s}} \otimes I - D) (\varkappa_{\overline{s}} \otimes \psi(s)), \quad s \in \mathbb H_{1/2}.
	\eeq
Since $T_{\overline{s}} \otimes I - D$ is invertible, 
	\beqn
	\varkappa_{\overline{s}} \otimes \psi(s) = (T_{\overline{s}} \otimes I - D)^{-1}\gamma, \quad  s \in \mathbb H_{1/2}.
	\eeqn
It follows that	
\beqn
	V\begin{pmatrix} 1 \\  \varkappa_{\overline{s}} \otimes \psi(s)
	\end{pmatrix} 
	&=& \begin{pmatrix} a + \langle  \varkappa_{\overline{s}} \otimes \psi(s), \beta  \rangle_{_{\mathscr H^2 \otimes \mathscr K}}
		\\  \gamma +  D(\varkappa_{\overline{s}} \otimes \psi(s))
	\end{pmatrix}\\
	&=& \begin{pmatrix} a + \langle  (T_{\overline{s}} \otimes I - D)^{-1}\gamma, \beta  \rangle_{_{\mathscr H^2 \otimes \mathscr K}} \\  \gamma +  D(\varkappa_{\overline{s}} \otimes \psi(s)) \end{pmatrix}\\
	&\overset{\eqref{phi-expression}\& \eqref{gamma-expression}}=& \begin{pmatrix} \varphi(s) \\  \kappa_{\mu, \overline{s}} \otimes \psi(s) \end{pmatrix}, \quad s \in \mathbb H_{1/2}.
	\eeqn
	Since $V$ is an isometry on $\bigvee \{1 \oplus  \varkappa_{\overline{s}} \otimes \psi(s) : s \in \mathbb H_{\frac{1}{2}}\}$, we have
	\beqn
	&&1 + \langle  \varkappa_{\overline{s}} \otimes \psi(s),  \varkappa_{\overline{u}} \otimes \psi(u) \rangle_{\mathscr H^2 \otimes \mathscr K}\\
	=  &&\varphi(s)\overline{\varphi(u)} + \langle \kappa_{\mu, \overline{s}} \otimes \psi(s), \kappa_{\mu, \overline{u}} \otimes \psi(u) \rangle_{{\mathscr H_{\mu}} \otimes \mathscr K}, \quad s, u \in \mathbb H_{1/2}.
	\eeqn
	By the reproducing property and \eqref{kappa-b}, we get
	\beqn
	1 = \varphi(s)\overline{\varphi(u)} + \langle \psi(s), \psi(u) \rangle_{\mathscr K} \sum_{m=1}^\infty \mu(m)m^{-s-\overline{u}}, \quad s, u \in \mathbb H_{1/2},
	\eeqn
	and hence, by \eqref{recipro-zeta}, 
	\beqn
	(1- \varphi(s)\overline{\varphi(u)})\zeta(s+\overline{u}) = \langle \psi(s), \psi(u) \rangle_{\mathscr K}. 
	\eeqn
	Equivalently, $(1- \varphi(s)\overline{\varphi(u)})\zeta(s+\overline{u})$ is a positive semi-definite kernel on $\mathbb H_{1/2}.$ Hence, by \cite[Theorem~5.21]{P-R}, $\varphi \in \mathcal M(\mathscr H^2)$ with multiplier norm at most $1.$ Since $\mathcal M(\mathscr H^2) = H^\infty(\mathbb H_0) \cap \mathcal D$ (isometrically) (see \cite[Theorem~3.1]{HLS}), it follows that $\varphi \in H^\infty(\mathbb H_0) \cap \mathcal D$ with $\|\varphi\|_\infty \Le 1.$ This completes the proof. 
\end{proof}


\section*{Concluding remarks}

We conclude this paper with an observation towards a possible converse of Theorem~\ref{pick-theorem-char}. Suppose that Theorem~\ref{pick-theorem-char}(ii) holds. Then, by Lemma~\ref{pick-char-riemann}, 
the interpolation problem 
\beqn
\varphi(\lambda_i) = w_i, \quad i=1,2, \ldots, n~\text{with}~\varphi \in H_1^\infty(\mathbb H_0)
\eeqn
admits more than one solutions. Hence, by an application of \cite[Corollary~8.82]{A-M}, the set of all solutions $\varphi$ is of the form
\beqn
\mathcal S:=\{\tilde{g}_{1,1} + \tilde{g}_{1,2} \tilde{g}_{2,1} (1-\tilde{g}_{2,2} h)^{-1} : h \in H_1^\infty(\mathbb H_0) \}, 
\eeqn 
where $\tilde{g}_{i,j} = g_{i,j} \circ C$ (see \eqref{psi-r}), each $g_{i,j}$ is a rational function of degree at most $n$ with poles outside the closed unit disc, $\tilde{g}_{1,2}(\lambda_i) = 0$ for all $i =1,2, \ldots, n,$ and the matrix $[g_{i,j}]_{i,j=1}^2$ is unitary on the unit circle $\mathbb T.$ 
However, even if in addition Theorem~\ref{pick-theorem-char}(i) holds, we do not know whether there exists a solution $\varphi \in \mathcal S \cap \mathcal D.$ 

\vskip.3cm
\noindent 
\textit{Acknowledments}: The authors would like to thank Prof. B. K. Das for some helpful discussions on the subject matter of this paper. The second author is supported by the Institute post-doctoral fellowship, IIT Bombay. Finally, we thank the anonymous referee for several suggestions that improved the original presentation.

\vskip.2cm
\noindent 
\textit{Statement and Declarations:}

\vskip.3cm

\noindent
{\bf Conflict of interest} The authors declare that they have no conflict of interest.

\vskip.3cm

\noindent 
{\bf Data Availability} No data was used for the research described in the article.

\vskip.3cm

\noindent 
{\bf Funding} This research received no specific grant from any funding agency.

\vskip.3cm

\noindent 
{\bf Author contributions} Both the authors equally contributed to this paper.

\end{document}